\setlist[itemize]{leftmargin=*}
\newcommand{\R}{\mathbb{R}}
\newcommand{\de}{\partial}
\newcommand{\tr}{\text{trace}}
\renewcommand{\bar}{\overline}
\newcommand{\vol}{\mathrm{vol}}
\newcommand{\ang}[1]{\langle #1\rangle}
\renewcommand{\epsilon}{\varepsilon}
\theoremstyle{definition}
\newtheorem{definition}{Definition}
\newtheorem{rmk}[definition]{Remark}
\newtheorem*{definition*}{Definition}
\newtheorem*{rmk*}{Remark}
\newtheorem*{ack*}{Acknowledgement}
\newtheorem*{acks*}{Acknowledgements}
\newtheorem{thm}[definition]{Theorem}
\newtheorem{lemma}[definition]{Lemma}
\newtheorem{corollary}[definition]{Corollary}
\newtheorem*{thm*}{Theorem}
\newtheorem*{lemma*}{Lemma}
\newtheorem*{corollary*}{Corollary}
\newtheorem*{proposition*}{Proposition}
\newtheorem*{claim*}{Claim}
\newtheorem*{conj*}{Conjecture}
\numberwithin{equation}{section}
\numberwithin{definition}{section}
\crefname{thm}{Theorem}{Theorems}
\begin{document}

\title[new weighted inequalities]{New weighted inequalities on two--manifolds}
	\author[A. Halavati]{Aria Halavati}
	\address{Courant Institute of Mathematical Sciences, New York University, 251 Mercer Street, New York, NY 10012, United States of America.}
	\email{aria.halavati@cims.nyu.edu}

    \address{Bocconi University, Department of Decision Sciences, Via Guglielmo R\"ontgen 1, 20136 Milano,
Italy}
\email{aria.halavati@unibocconi.it}
  \dedicatory{Dedicated to the memory of Kian Pirfalak (2013--2022)}
	\begin{abstract}
We establish a new class of weighted $L^2$ Poincar\'e and elliptic functional inequalities on smooth two-manifolds with explicit constants, for a family of weights satisfying a differential equation. This family includes, in particular, weights comparable to products of positive powers of the geodesic distance to finitely many points. Our primary motivation is the derivation of estimates associated with a weighted Hodge decomposition for one-forms.
\end{abstract}

\maketitle
\frenchspacing

\section{Introduction}

\subsection{Introduction}

In this article we investigate \textit{multipolar} weighted elliptic and Poincar\'e-type inequalities on two-manifolds. Our main motivation comes from the \textit{weighted Hodge decomposition} of one-forms on a Riemannian two-manifold $M^2$ with boundary; see \cref{weighted-hodge}.

More specifically, we consider weights including, but not limited to, those of the form
\begin{align*}
    \omega(x) \sim \prod_{k=1}^{N} d_M(x,x_k)^{\alpha_k}.
\end{align*}
Here $\{x_k\}_{k=1}^N \subset M^2$ is a finite collection of points, $d_M(x,y)$ denotes the geodesic distance on $M^2$, and $\alpha_k>0$. In fact, the inequalities in this paper apply to any weight $\omega$ such that
\begin{align}
    \omega^2\Delta_g\log\omega = 0\,,
\end{align}
possibly with a controlled right-hand side.
This note, together with \cite{Halavati-stability,DHP}, forms the first part of a trilogy and develops the main analytical tools and estimates used in the proof of quantitative stability for Yang--Mills--Higgs instantons in \cite{Halavati-stability}. We present the results here in a more general framework, since we expect them to be useful in other contexts as well.

\subsection{Relation with previous work}

Weighted functional inequalities, such as the classical Hardy inequality, have long played an important role in analysis and geometry. For instance, the Caffarelli--Kohn--Nirenberg inequalities \cite{CKN} provide the basic unipolar model for first-order weighted interpolation inequalities in Euclidean space as follows:
\begin{align}
\begin{aligned}
    \bigl\||x|^\gamma u\bigr\|_{L^r(\mathbb R^n)}
    &\leq
    C
    \bigl\||x|^\alpha \nabla u\bigr\|_{L^p(\mathbb R^n)}^\theta
    \bigl\||x|^\beta u\bigr\|_{L^q(\mathbb R^n)}^{1-\theta},
    \\
    \frac1r+\frac{\gamma}{n}
    &=
    \theta\left(\frac1p+\frac{\alpha-1}{n}\right)
    +
    (1-\theta)\left(\frac1q+\frac{\beta}{n}\right).
\end{aligned}
\end{align}
These were used in the analysis of the Navier--Stokes equations in \cite{CKN-2} and provide examples of \textit{unipolar} weights, namely weights vanishing at a single point. Such inequalities, together with their optimal constants, were studied in detail in \cite{Catrina}. Later in \cite{Felli-Terracini} a special \textit{multipolar} version of these inequalities was studied in dimension $N\geq 3$, in which the authors prove:
\begin{align}
    \int_{\R^N} \sum_{i=1}^{k} \lambda_i \frac{|u|^2}{|x-a_i|^2} \leq \int_{\R^N}|\nabla u|^2\,,
\end{align}
provided that $\sum_{\substack{i=1,\dots,k\\ \lambda_i > 0}}\lambda_i < \frac{(N-2)^2}{4}$. A different class of weights was considered in \cite{Cazacu-Zuazua} in which they prove, for $N\geq 3$:
\begin{align}
    \int |\nabla u|^2\,dx
    \geq
    \frac{(N-2)^2}{n^2}
    \sum_{1\leq i<j\leq n}
    \int
    \frac{|x_i-x_j|^2}{|x-x_i|^2 |x-x_j|^2}
    u^2\,dx.
\end{align}
The Riemannian version was later established in \cite{multipolar}. Moreover in \cite{Cazacu} the author was able to prove a special case in dimension $N=2$ if the poles are located at the boundary of the support. In another work \cite{Canale} these techniques were generalized to prove inequalities with more general potentials. 

However, these results concern the classical inverse-square Hardy potential and are naturally formulated in dimensions \(N\geq 3\), where the standard Hardy constant is positive. By contrast, the two-dimensional situation is critical: the usual inverse-square Hardy inequality degenerates; consequently, we also require our weights to behave like the distance to these poles to any \textit{positive} power. This is also reflected in the identity $\omega^2\Delta_g\log\omega = -\kappa\omega^2$, where the point masses from $\Delta_g\log\omega$ would be suppressed by $\omega^2$ if $\omega$ vanishes at the poles. Thus the first main theorem of this paper can be viewed as a two-dimensional multipolar analogue of the CKN inequality, rather than as a direct instance of the higher-dimensional multipolar Hardy theory. In addition to $n=2$, the condition we impose is flexible enough to include any positive power of the distance to the multipoles, and it can be written intrinsically so as to apply to any smooth Riemannian two-manifold.

The elliptic estimates proved here are also different in nature from the usual Hardy--Rellich inequalities and the literature cited above. Classical Rellich-type estimates control lower-order weighted norms by second derivatives and have been developed in several settings, including Euclidean domains and Riemannian manifolds. In the present work we are able to prove elliptic estimates, as opposed to only interpolation inequalities requiring control of only the trace of the Hessian. The proof uses a two-dimensional cancellation for the trace-free tensor \(\omega^2\nabla^2\log\omega\) encoded in \cref{lemma-1}. This cancellation is not part of the standard Hardy--Rellich theory and is the reason the argument is intrinsically two-dimensional. One might also be tempted to use Muckenhoupt elliptic theory; however, the weights considered here are not in the relevant Muckenhoupt class (see \cite{Coifman-Fefferman}).

Finally, the elliptic estimates are motivated by the Hodge-theoretic question (see \cref{weighted-hodge}), and they should be compared with the broader literature on Hodge decompositions, for example \cite{nonlinear-hodge}. The result here is quantitative in a singular weighted norm: it compares the co-closed parts of the usual and weighted Hodge decompositions. This estimate is the form needed in the Yang--Mills--Higgs applications in \cite{Halavati-stability,DHP}, where the energy functional naturally contains a weighted factor. Another viewpoint is to regard this result as the Hodge decomposition on the infinite cylinder in log-polar coordinates.

Moreover, in this article we are able to prove these inequalities with universal constants that are independent of both the weight and the location of its vanishing points. This uniformity is one of the main technical advantages of our approach, and it is exploited in \cite{Halavati-stability} to strengthen the results obtained therein and to remove the dependence of the constant on the location of the poles.

\subsection{Motivation} To motivate our approach, we first discuss the weighted analogue of the classical Hodge decomposition. The standard Hodge decomposition of a one-form $A$ seeks, in a variational sense, the closest closed (or co-closed) form to $A$. In the weighted setting, consider the toy model with weight $|x|^2$ and the variational problem
\begin{align}\label{intro:weighted-hodge-toy}
    \inf_{\phi\in C^{\infty}_c(B_1^2)} \int_{B_1^2} |x|^2|A-\star d\phi|^2.
\end{align}
A natural function space for this problem is
\begin{align*}
    X = \left\{\phi\in C^\infty_c(B_1^2): \int_{B_1^2}|x|^2|d\phi|^2 < \infty \right\},
\end{align*}
equipped with the weighted inner product
\begin{align*}
    \langle \phi_1,\phi_2\rangle_X := \int_{B_1^2}|x|^2\langle d\phi_1,d\phi_2\rangle.
\end{align*}
Passing to the completion $\bar X$ with respect to the induced norm yields a natural framework for solving \eqref{intro:weighted-hodge-toy} by the direct method of the calculus of variations.

A key result, which follows from a special case of the Caffarelli--Kohn--Nirenberg interpolation inequalities \cite{CKN}, states that
\begin{align*}
    \int_{\R^2} |f|^2 \leq \int_{\R^2} |x|^2|df|^2
    \qquad \text{for all } f\in C_c^{\infty}(\R^2).
\end{align*}
As observed in \cite{Catrina}, this inequality admits a geometric interpretation via the log-polar transformation
\[
B_1^2 \ni x \longmapsto (-\log |x|,\theta)\in [0,\infty)\times S^1.
\]
Writing $u=|x|f$, the weighted Dirichlet term becomes the Sobolev norm on the infinite cylinder:
\begin{align*}
    \int_{\R^2}|x|^2 |df|^2
    =
    \int_{S^1\times[0,\infty)} \bigl(|u|^2 + |du|^2\bigr)\,d\vol_{S^1\times[0,\infty)}.
\end{align*}

Using weak lower semicontinuity in Sobolev spaces---either on the cylinder or directly through the CKN inequality---we obtain a minimizer $\phi$ of \eqref{intro:weighted-hodge-toy}. The associated Euler--Lagrange equation is
\begin{align*}
    d\bigl(|x|^2(A-\star d\phi)\bigr)=0,
\end{align*}
which implies that $A-\star d\phi$ is closed. Since it has vanishing trace, it follows that
\begin{align*}
    |x|A = |x|\star d\phi + |x|^{-1}d\xi
\end{align*}
for some compactly supported function $\xi$. This is the \textit{weighted Hodge decomposition}, and it satisfies the orthogonality relation
\begin{align*}
    \int_{B_1^2} |x|^2|A|^2
    =
    \int_{B_1^2}|x|^2|d\phi|^2
    +
    \int_{B_1^2}|x|^{-2}|d\xi|^2.
\end{align*}
Comparing the standard Hodge decomposition
\[
A = \star dp + dq
\]
with the weighted one, we obtain, among other things, the estimate
\begin{align*}
    \int_{B_1^2} |x|^{2+2\epsilon}|d(\phi-p)|^2
    \leq
    C\epsilon^{-2}\int_{B_1^2} |x|^{-2}|d\xi|^2.
\end{align*}
In fact, we show that the co-closed parts of the weighted and unweighted decompositions are close in $L^2$.

\subsection{General formulation and examples of weights}

In this article we extend the preceding heuristic to all weights satisfying the weak formulation \cref{weak-weight-equation} of the differential equation
\begin{equation}\label{strong-weight-equation}
    \omega^2 \Delta_g\log(\omega) = -\kappa(x)\omega^2,
\end{equation}
where $\omega$ is a positive weight in $W^{1,2}(M^2)$ and $\Delta_g$ is the Laplace--Beltrami operator on a smooth connected Riemannian two-manifold $(M^2,g)$, possibly with boundary.

This formulation allows us to handle weights vanishing at several points, while the proofs remain based on careful but elementary integration by parts and yield uniform constants.

We now mention a few examples.
\begin{itemize}
    \item Let $\Omega\subset\R^2$ be a bounded open set, and define
    \begin{align}\label{example-1}
        \omega(x)=\prod_{i=1}^{N}|x-x_i|^{\alpha_i},
    \end{align}
    where $x_1,\dots,x_N\in \Omega$ and $\alpha_1,\dots,\alpha_N>0$.

    \item Let $\Omega\subset \mathcal M^2$ be a bounded open domain in a smooth two-manifold, let $\mathcal G_p$ denote the Green's function of $\Omega$ with pole at $p$, and define
    \begin{align}\label{example-2}
        \omega(x)=\prod_{i=1}^{N} e^{-\alpha_i\mathcal G_{p_i}(x)},
    \end{align}
    where $p_1,\dots,p_N\in\Omega$ and $\alpha_1,\dots,\alpha_N>0$.
\end{itemize}
The weights in \cref{example-2} are comparable to multipolar distance weights:
\begin{align*}
    C^{-1}\prod_{i=1}^{N} d_M(x,p_i)^{\alpha_i}
    \leq
    \omega(x)
    \leq
    C\prod_{i=1}^{N} d_M(x,p_i)^{\alpha_i},
\end{align*}
where $d_M(x,y)$ denotes the geodesic distance on $M$.

\subsection{Main results}

Let $\Omega\subset \mathcal M^2$ be a smooth connected open domain, and let $\lambda_1$ denote the first Dirichlet eigenvalue of the Laplace--Beltrami operator on $\Omega$.

Our first result is a two-dimensional generalization of the Caffarelli--Kohn--Nirenberg interpolation inequality.

\begin{thm}\label{Generalized-CKN-thm}
Let $(\mathcal{M}^2,g)$ be a smooth two-manifold, let $\omega$ be a weight as in \cref{weak-weight-equation}, and let $\Omega\subset\mathcal{M}^2$ be a smooth open domain. Then every $f\in C_c^{\infty}(\Omega)$ satisfies
\begin{equation}\label{Generalized-CKN-inequality}
    \int_{\Omega} |\nabla\omega|^2 |f|^2\, d\vol_g
    \leq
    \int_{\Omega} \omega^2 |\nabla f|^2\, d\vol_g,
\end{equation}
provided that $\kappa \leq \lambda_1$.
\end{thm}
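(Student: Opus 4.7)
My plan is to test the weak form of the weight equation against $\phi=f^{2}$ and then recognize the resulting identity as an expansion of $\int|\nabla(\omega f)|^{2}$, so that the Poincaré inequality associated to $\lambda_{1}$ closes the argument. Throughout set $A := \int_{\Omega}|\nabla\omega|^{2}f^{2}\,d\vol_g$, $B := \int_{\Omega}\omega^{2}|\nabla f|^{2}\,d\vol_g$, and $I := \int_{\Omega}\omega f \,\nabla\omega\cdot\nabla f\,d\vol_g$; this is the standard bookkeeping needed to identify cancellations.

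The first step is to unpack \cref{strong-weight-equation}. Since $\omega^{2}\Delta_g\ln\omega=\omega\Delta_g\omega-|\nabla\omega|^{2}$, after an integration by parts the equation weakly asserts that for every $\phi\in C_c^{\infty}(\Omega)$,
\begin{equation*}
  2\int_{\Omega}\phi\,|\nabla\omega|^{2}\,d\vol_g + \int_{\Omega}\omega\,\nabla\phi\cdot\nabla\omega\,d\vol_g \;=\; \int_{\Omega}\kappa\,\omega^{2}\phi\,d\vol_g.
\end{equation*}
(This should match the formula \cref{weak-weight-equation} referenced in the statement.) I then take $\phi=f^{2}$, which is admissible since $f\in C_c^{\infty}(\Omega)$, and obtain $2A+2I=\int_{\Omega}\kappa\,\omega^{2}f^{2}\,d\vol_g$.

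Next I compute
\begin{equation*}
  \int_{\Omega}|\nabla(\omega f)|^{2}\,d\vol_g \;=\; A+2I+B \;=\; B-A+\int_{\Omega}\kappa\,\omega^{2}f^{2}\,d\vol_g,
\end{equation*}
where the second equality substitutes the identity from the previous step. Since $\omega\in W^{1,2}$ and $f\in C_c^{\infty}(\Omega)$, the product $\omega f$ lies in $H_0^{1}(\Omega)$, so the variational characterisation of $\lambda_1$ gives $\lambda_1\int_{\Omega}\omega^{2}f^{2}\,d\vol_g\leq \int_{\Omega}|\nabla(\omega f)|^{2}\,d\vol_g$. The hypothesis $\kappa\leq \lambda_1$ then yields $\int_{\Omega}\kappa\,\omega^{2}f^{2}\,d\vol_g\leq \int_{\Omega}|\nabla(\omega f)|^{2}\,d\vol_g$, and inserting the displayed identity on the right collapses the inequality to $0\leq B-A$, which is exactly \cref{Generalized-CKN-inequality}.

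The only real obstacle is justifying that $\phi=f^{2}$ is a legitimate test function and that $\omega f$ belongs to $H_0^{1}(\Omega)$ with the required Poincaré inequality; both follow from $f\in C_c^{\infty}(\Omega)$ and the $W^{1,2}$ regularity of $\omega$, together with a density or cutoff argument if $\omega$ is only assumed weakly in $W^{1,2}$. The algebraic miracle—namely that the cross term $2I$ which appears in both the weak equation and in $|\nabla(\omega f)|^{2}$ cancels exactly the factor of $2$ needed to reduce everything to Poincaré—is what drives the sharp constant $1$ on the right-hand side of \cref{Generalized-CKN-inequality}.
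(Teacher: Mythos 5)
Your proposal is correct and follows essentially the same route as the paper: the paper likewise expands $\int_\Omega|\nabla(\omega f)|^2$, rewrites the cross term as $-\tfrac12\int_\Omega\omega^2\Delta_g(f^2)$ (i.e.\ tests the weak weight equation with $\phi=f^2$), and then absorbs $\int_\Omega\kappa\,\omega^2f^2$ using the variational characterisation of $\lambda_1$ together with $\kappa\leq\lambda_1$. Your bookkeeping with $A$, $B$, $I$ reproduces the paper's cancellation exactly.
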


Our next theorem gives a homogeneous elliptic estimate.

\begin{thm}\label{elliptic-inequality-thm}
Let $(\mathcal{M}^2,g)$ be a smooth two-manifold, let $\omega$ be a weight as in \cref{weak-weight-equation}, and let $\Omega\subset\mathcal{M}^2$ be a smooth open domain. Then every $f\in C_c^{\infty}(\Omega)$ satisfies
\begin{equation}\label{elliptic-inequality}
    \int_{\Omega} \omega^2 |\nabla f|^2\, d\vol_g
    \leq
    \tau^{-1}\int_{\Omega}
    \left(
    2\frac{\omega^4}{|\nabla \omega|^2}|\Delta_g f|^2
    +
    5|\nabla \omega|^2 |f|^2
    \right)\, d\vol_g,
\end{equation}
provided that
\[
-\frac{\lambda_1}{8}(2-\tau)\leq \kappa \leq \lambda_1
\qquad\text{for some } 0<\tau\leq 2.
\]
\end{thm}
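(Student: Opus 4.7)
The plan is to express $\int_\Omega \omega^2|\nabla f|^2\,d\vol_g$ as an integration-by-parts identity whose only nontrivial remaining term involves $\Delta_g f$, and then apply Cauchy--Schwarz. Starting from $\int\omega^2|\nabla f|^2 = -\int f\,\nabla\cdot(\omega^2\nabla f)$, integrating the cross term $-2\int f\omega\nabla\omega\cdot\nabla f$ once more by parts against $\omega\nabla\omega$, and substituting the pointwise identity $\omega\Delta_g\omega = |\nabla\omega|^2 - \kappa\omega^2$ (an immediate consequence of \cref{strong-weight-equation} via $\Delta_g\ln\omega = \Delta_g\omega/\omega - |\nabla\omega|^2/\omega^2$), I would obtain
\[
\int_\Omega \omega^2|\nabla f|^2\,d\vol_g = 2\int_\Omega f^2|\nabla\omega|^2\,d\vol_g - \kappa\int_\Omega f^2\omega^2\,d\vol_g - \int_\Omega f\omega^2\Delta_g f\,d\vol_g.
\]

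For the $\Delta_g f$ term I would apply Cauchy--Schwarz via the splitting $f\omega^2\Delta_g f = (\omega^2|\nabla\omega|^{-1}\Delta_g f)(|\nabla\omega|f)$ and Young's inequality with a parameter $\mu>0$, giving $\tfrac{1}{2\mu}\int \tfrac{\omega^4}{|\nabla\omega|^2}|\Delta_g f|^2 + \tfrac{\mu}{2}\int f^2|\nabla\omega|^2$. Choosing $\mu$ proportional to $\tau$ then produces the prescribed prefactor $2/\tau$ on the Laplacian integral; when $\kappa\geq 0$ the term $-\kappa\int f^2\omega^2$ is non-positive and the coefficient of $\int f^2|\nabla\omega|^2$ becomes at most $2+\tau/8\leq 5/\tau$ on $\tau\in(0,2]$.

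The genuinely delicate case is $\kappa<0$, where $-\kappa\int f^2\omega^2$ is positive and must be absorbed. For this I would derive the companion identity $\int|\nabla(\omega f)|^2 = \int\omega^2|\nabla f|^2 - \int f^2|\nabla\omega|^2 + \kappa\int f^2\omega^2$ by an analogous integration by parts (again using the weight equation, essentially the same computation underlying \cref{Generalized-CKN-thm}), and combine it with the Rayleigh-quotient bound $\int|\nabla(\omega f)|^2 \geq \lambda_1\int(\omega f)^2$. This yields the weighted Poincar\'e inequality $(\lambda_1-\kappa)\int f^2\omega^2 \leq \int\omega^2|\nabla f|^2 - \int f^2|\nabla\omega|^2$, which gives $-\kappa\int f^2\omega^2 \leq \alpha\bigl(\int\omega^2|\nabla f|^2 - \int f^2|\nabla\omega|^2\bigr)$ with $\alpha := -\kappa/\lambda_1 \in [0,(2-\tau)/8]$. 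Moving the $\alpha\int\omega^2|\nabla f|^2$ piece to the left-hand side and choosing $\mu = \tau/(4(1-\alpha))$ then recovers the stated inequality.

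The main obstacle is the constant bookkeeping at the final step: the precise threshold $-\lambda_1(2-\tau)/8\leq\kappa$ in the hypothesis emerges from jointly balancing the Cauchy--Schwarz weight $\mu$ against the absorption factor $1-\alpha$, and one must verify that the resulting coefficient $(2 + \tau/(8(1-\alpha)) - \alpha)/(1-\alpha)$ of $\int f^2|\nabla\omega|^2$ stays below $5/\tau$ throughout the admissible region $\tau\in[0,2]$, $\alpha\in[0,(2-\tau)/8]$---a direct arithmetic check whose tightest case is $\tau=2$, $\alpha=0$, where $9/4<5/2$ just fits.
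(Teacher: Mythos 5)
Your argument is correct and follows essentially the same route as the paper: the paper's opening step $0\le\int_\Omega\bigl|\tfrac{\omega^2}{|\nabla\omega|}\Delta_g f+|\nabla\omega|f\bigr|^2$ is exactly your Cauchy--Schwarz/Young step with the parameter fixed at $\mu=1$, the double integration by parts through the weak weight equation is identical, and the absorption of negative $\kappa$ via the Rayleigh quotient together with the identity $\int|\nabla(\omega f)|^2=\int\omega^2|\nabla f|^2-\int|\nabla\omega|^2f^2+\kappa\int\omega^2f^2$ (i.e.\ \cref{Generalized-CKN-thm}) is also the paper's mechanism. The only difference is cosmetic bookkeeping: you tune $\mu$ to land exactly on the coefficient $2/\tau$ and then verify $5/\tau$ dominates the remaining coefficient, whereas the paper keeps the fixed square and accepts the slightly generous stated constants.
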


The next result is the main ingredient in the proof of \cref{weighted-hodge}. Here the homogeneity is broken in order to remove the term involving $|\nabla\omega|\,f$ from the right-hand side, at the expense of introducing a singular factor in $\epsilon$.

\begin{thm}\label{epsilon-elliptic-inequality-thm}
Let $(\mathcal{M}^2,g)$ be a smooth two-manifold, let $\omega$ be a weight as in \cref{weak-weight-equation} with $\kappa=0$, let $\epsilon>0$, and let $\Omega\subset\mathcal{M}^2$ be a smooth open domain. Then every $f\in C_c^{\infty}(\Omega)$ satisfies
\begin{equation}\label{epsilon-elliptic-inequality}
    \int_{\Omega} \omega^{2+2\epsilon} |\nabla f|^2\, d\vol_g
    \leq
    C\,\frac{(\sup_{\Omega}\omega)^{2\epsilon}}{\epsilon^2}
    \int_{\Omega} \frac{\omega^4}{|\nabla \omega|^2} |\Delta_g f|^2\, d\vol_g,
\end{equation}
where
\[
C \leq \frac{8\epsilon^2 + 5(1+\epsilon)^4}{8(1+\epsilon)^2},
\]
which tends to $\frac{5}{8}$ as $\epsilon\to 0$.
\end{thm}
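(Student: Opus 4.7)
The plan is to reduce to the homogeneous elliptic estimate of \cref{elliptic-inequality-thm} applied to a rescaled weight, and then to control the residual term using the two-dimensional symmetric matrix lemma alluded to in the abstract.

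First I would observe that since $\Delta_g\ln\omega=-\kappa=0$ under the hypothesis, the rescaled weight $\omega^{1+\epsilon}$ satisfies $\Delta_g\ln(\omega^{1+\epsilon})=(1+\epsilon)\Delta_g\ln\omega=0$, so it is itself an admissible weight in the sense of \cref{weak-weight-equation} with $\kappa'=0$, and therefore satisfies the hypotheses of \cref{elliptic-inequality-thm} for any $\tau\in[0,2]$. Applying that theorem to $\omega^{1+\epsilon}$ with $\tau=2$, and using the pointwise identities $|\nabla\omega^{1+\epsilon}|^2=(1+\epsilon)^2\omega^{2\epsilon}|\nabla\omega|^2$ and $\omega^{4+2\epsilon}\leq(\sup_\Omega\omega)^{2\epsilon}\omega^4$, yields
\begin{equation*}
\int_\Omega\omega^{2+2\epsilon}|\nabla f|^2\,d\vol_g\leq\frac{(\sup_\Omega\omega)^{2\epsilon}}{(1+\epsilon)^2}\int_\Omega\frac{\omega^4|\Delta_g f|^2}{|\nabla\omega|^2}\,d\vol_g+\frac{5(1+\epsilon)^2}{2}\int_\Omega\omega^{2\epsilon}|\nabla\omega|^2 f^2\,d\vol_g.
\end{equation*}
The first term on the right already matches the target coefficient $1/(1+\epsilon)^2$, so the task reduces to controlling the residual.

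The residual cannot be absorbed via \cref{Generalized-CKN-thm} alone: applied to $\omega^{1+\epsilon}$ it gives only $(1+\epsilon)^2\int\omega^{2\epsilon}|\nabla\omega|^2 f^2\leq\int\omega^{2+2\epsilon}|\nabla f|^2$, which multiplied by $5/2$ produces a coefficient larger than $1$ on the left-hand side. The substance of the theorem is therefore the sharp Hardy-type estimate
\begin{equation*}
\int_\Omega\omega^{2\epsilon}|\nabla\omega|^2 f^2\,d\vol_g\leq\frac{(\sup_\Omega\omega)^{2\epsilon}}{4\epsilon^2}\int_\Omega\frac{\omega^4|\Delta_g f|^2}{|\nabla\omega|^2}\,d\vol_g,
\end{equation*}
where the $1/\epsilon^2$ rate reflects the degeneration of the inequality as $\epsilon\to 0$. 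I would derive this starting from the integration-by-parts identity
\begin{equation*}
\int_\Omega\omega^{2+2\epsilon}f\Delta_g f\,d\vol_g=2(1+\epsilon)^2\int_\Omega\omega^{2\epsilon}|\nabla\omega|^2 f^2\,d\vol_g-\int_\Omega\omega^{2+2\epsilon}|\nabla f|^2\,d\vol_g
\end{equation*}
(obtained using \cref{weak-weight-equation} in the form $\omega\Delta_g\omega=|\nabla\omega|^2$ and $\Delta_g(\omega^{2+2\epsilon})=4(1+\epsilon)^2\omega^{2\epsilon}|\nabla\omega|^2$), combined with Cauchy--Schwarz on the left-hand side and an $\epsilon$-calibrated Young inequality whose small parameter produces the $1/\epsilon^2$ blow-up. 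The two-dimensional matrix lemma enters to control the cross-term involving $\nabla^2 f$, exploiting that in dimension two $|\nabla^2 f|^2=(\Delta_g f)^2-2\det\nabla^2 f$ and that the determinant behaves as a null Lagrangian modulo weighted corrections consistent with the weight equation.

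Combining both estimates produces $C=\epsilon^2/(1+\epsilon)^2+5(1+\epsilon)^2/8=[8\epsilon^2+5(1+\epsilon)^4]/[8(1+\epsilon)^2]$, matching the stated constant and tending to $5/8$ as $\epsilon\to 0$. The main obstacle is the Hardy-type inequality above: the reduction to \cref{elliptic-inequality-thm} via the rescaled weight is a routine manipulation, but the sharp $1/\epsilon^2$ dependence in the residual requires genuinely two-dimensional input rather than the generalized CKN.
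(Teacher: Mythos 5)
Your outer architecture coincides with the paper's: apply \cref{elliptic-inequality-thm} to the weight $\omega^{1+\epsilon}$ with $\tau=2$, and reduce the whole theorem to the Hardy-type bound
\begin{equation*}
\int_\Omega\omega^{2\epsilon}|\nabla\omega|^2 f^2\,d\vol_g\;\le\;\frac{(\sup_\Omega\omega)^{2\epsilon}}{4\epsilon^2}\int_\Omega\frac{\omega^4}{|\nabla\omega|^2}|\Delta_g f|^2\,d\vol_g\,,
\end{equation*}
which is exactly \cref{final-eq-1}; your constant bookkeeping is also correct. The gap is in your proposed proof of this bound. Write $X=\int\omega^{2\epsilon}|\nabla\omega|^2f^2$, $Y=\int\omega^{2+2\epsilon}|\nabla f|^2$ and $Z=\int\frac{\omega^{4+2\epsilon}}{|\nabla\omega|^2}|\Delta_gf|^2$. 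Your integration-by-parts identity reads $2(1+\epsilon)^2X-Y=\int\omega^{2+2\epsilon}f\,\Delta_gf\le X^{1/2}Z^{1/2}$, and to extract $X\lesssim Z/\epsilon^2$ from it you would need a lower bound $2(1+\epsilon)^2X-Y\ge c\,\epsilon\,X$. But the only a priori information runs the other way: \cref{Generalized-CKN-thm} applied to $\omega^{1+\epsilon}$ gives $Y\ge(1+\epsilon)^2X$, so $2(1+\epsilon)^2X-Y$ is only bounded \emph{above} by $(1+\epsilon)^2X$ and may well be negative. An upper bound for $Y$ in terms of $X$ and $Z$ is essentially the statement being proved, so this route is circular, and no calibration of the Young inequality repairs the sign.

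What actually supplies the missing coercivity in the paper is a separate, nontrivial input: one integrates the three-term square $\big|\frac{\omega^2}{|\nabla\omega|}\Delta_gf+2\omega\langle\frac{\nabla\omega}{|\nabla\omega|},\nabla f\rangle+2|\nabla\omega|f\big|^2$ and, after Stokes, is left with the Hessian term \cref{crucial-2}. There \cref{lemma-1} is applied with $A=\omega^2\nabla^2\ln(\omega)$ (trace-free because $\Delta_g\ln\omega=0$), $b=\nabla\omega/|\nabla\omega|$ and $c=\nabla f$ --- not, as you suggest, to $\nabla^2 f$ via $|\nabla^2f|^2=(\Delta_gf)^2-2\det\nabla^2f$; no second derivatives of $f$ ever need to be controlled. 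This yields the preliminary bound \cref{prelim-bound}, whose left-hand side controls only the \emph{directional} derivative $\omega^2\langle\nabla f,\nabla\omega/|\nabla\omega|\rangle^2$ rather than the full $\omega^2|\nabla f|^2$; that directional quantity is then completed to the perfect square $|\omega\langle\nabla f,\nabla\omega/|\nabla\omega|\rangle+|\nabla\omega|f|^2$ in \cref{temp-2,temp-3}, and the $\epsilon^2$ gain comes from comparing the coefficient $(1+2\epsilon)$ against $(1+\epsilon)^2$. Your sketch contains no mechanism producing this directional control, and the remark about $\det\nabla^2 f$ being a null Lagrangian does not substitute for it.
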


We recall that the Laplace--Beltrami operator $\Delta_g$ acting on functions $u\in W^{1,2}(\mathcal M,g)$ is defined weakly by
\begin{align*}
    \int_{\Omega} -\Delta_g u\, v\, d\vol_g
    =
    \int_{\Omega} \langle \nabla u,\nabla v\rangle\, d\vol_g
    \qquad\text{for all } v\in W_0^{1,2}(\Omega).
\end{align*}

In the model case \(\omega=|x|^\alpha\), \cref{Generalized-CKN-thm,epsilon-elliptic-inequality-thm} yield the estimates
\begin{align*}
    \int_{\R^2}|x|^{2(\alpha-1)}|f|^2
    &\leq
    \alpha^{-2}\int_{\R^2}|x|^{2\alpha}|\nabla f|^2, \\
    \int_{\R^2}|x|^{2\alpha}|\nabla f|^2
    &\leq
    \alpha^{-2}\int_{\R^2}|x|^{2(\alpha+2)}|\Delta f|^2
    +
    \frac{5}{2}\alpha^2\int_{\R^2}|x|^{2(\alpha-1)}|f|^2, \\
    \int_{B_1}|x|^{2\alpha(1+\epsilon)}|\nabla f|^2
    &\leq
    C(\epsilon\alpha)^{-2}\int_{B_1}|x|^{2(\alpha+1)}|\Delta f|^2,
\end{align*}
provided that $\alpha>0$.

A central consequence of the preceding inequalities is the following estimate for the weighted Hodge decomposition.

\begin{corollary}\label{weighted-hodge}
Let $(\mathcal{M}^2,g)$ be a Riemannian two-manifold, let $\Omega\subset \mathcal{M}^2$ be a smooth open domain (with nonempty boundary), and let $\omega$ be a weight as in \cref{weak-weight-equation} with $\kappa=0$. Then every smooth compactly supported one-form $A\in C_c^{\infty}(\bigwedge^1\Omega)$ admits both a Hodge decomposition and a weighted Hodge decomposition of the form
\begin{align*}
    A = \star d\xi_1 + d\xi_2,
    \qquad
    \omega A = \star(\omega d\phi_1) + \omega^{-1}d\phi_2,
\end{align*}
for compactly supported functions $\xi_1,\xi_2,\phi_1,\phi_2$. Moreover, for every \(\epsilon>0\),
\begin{align*}
    \|\omega^{1+\epsilon} d(\xi_1-\phi_1)\|^2_{L^2(\Omega)}
    \leq
    C\frac{(\sup_{\Omega}\omega)^{2\epsilon}}{\epsilon^2}
    \|\omega^{-1} d\phi_2\|^2_{L^2(\Omega)}.
\end{align*}
\end{corollary}

The methods throughout the paper are inspired by \cite{CKN} and \cite{Catrina}. They are quite elementary and rely essentially only on Stokes' theorem. A crucial step in the proof, namely \cref{crucial-2}, uses \cref{lemma-1}, an identity for symmetric matrices in two dimensions that fails in higher dimensions.

\begin{rmk*}
In the case of unbounded domains, for example when $\mathcal M^2=\R^2$, we set $\lambda_1=0$ in \cref{elliptic-inequality-thm,Generalized-CKN-thm}.
\end{rmk*}

\begin{rmk*}
The conclusions of \cref{Generalized-CKN-thm,elliptic-inequality-thm} also hold for closed two-manifolds with $\Omega=\mathcal M^2$, provided one imposes the orthogonality condition
\[
\int_{\Omega}\omega f\, d\vol_g = 0.
\]
By contrast, \cref{epsilon-elliptic-inequality-thm} becomes trivial on closed manifolds when $\kappa=0$, since in that case
\[
\Delta_g(\omega^2)=4|d\omega|^2\geq 0,
\]
and hence \(\omega\) must be constant.
\end{rmk*}

\section{Proofs}
    \begin{definition}\label{weak-weight-equation}
      A weight $\omega\in W^{1,2}(\mathcal M^2)$ is admissible if it satisfies the following weak formulation of \cref{strong-weight-equation}: for every test function $\phi\in C_c^\infty(\mathcal M^2)$, one has
    \begin{equation*}
      \int_{\Omega} (4|\nabla \omega|^2 - 2\kappa\omega^2) \phi -  \omega^2\Delta_g\phi \;d\vol_g = 0\,.
    \end{equation*}
    \end{definition}
    To prove \cref{Generalized-CKN-thm,elliptic-inequality-thm,epsilon-elliptic-inequality-thm}, we use Stokes' theorem to relate the integral of a carefully chosen positive term to the difference between the right- and left-hand sides of \cref{elliptic-inequality,Generalized-CKN-inequality,epsilon-elliptic-inequality}. 
    \begin{rmk*}
        Before proceeding to the proofs, define the zero set $S:=\omega^{-1}(0)$. Then we may approximate any smooth function with finite weighted integral by a sequence of functions $f_k\in C_{c}^\infty(\Omega\setminus S)$ via cut-offs $\phi(\omega)$. If the integrals on the right-hand side are infinite, then the inequality is trivial.
    \end{rmk*}
    \begin{proof}[Proof of \cref{Generalized-CKN-thm}]
      We begin with the identity below:
      \begin{align*}
        0 \leq &\int_{\Omega} |\nabla (\omega f)|^2 \; d\vol_g =\int_{\Omega} |\omega \nabla f + \nabla\omega f|^2 \;d\vol_g\\
        = &\int_{\Omega} \omega^2 |\nabla f|^2 + |\nabla \omega|^2 |f|^2 + 2\ang{\omega \nabla \omega  ,\nabla f f } \;d\vol_g\,.
      \end{align*}
      After integrating by parts in the cross term and using \cref{weak-weight-equation}, we see that
      \begin{equation*}
        \int_{\Omega} 2\ang{\omega \nabla \omega  ,\nabla f f } \;d\vol_g = \int_{\Omega} -\frac{\omega^2}{2}\Delta_g(f^2) \;d\vol_g = \int_{\Omega} (\kappa\omega^2-2|\nabla \omega|^2) |f|^2 \;d\vol_g\,.
      \end{equation*}
      We then use $\kappa \leq \lambda_1$ to estimate
      \begin{equation*}
        \int_{\Omega} \kappa\omega^2|f|^2 \;d\vol_g \leq \int_{\Omega} |\nabla (\omega f)|^2 \;d\vol_g\,.
      \end{equation*}
      Finally, we conclude that
      \begin{equation*}
        0\leq \int_{\Omega} \omega^2|\nabla f|^2 - |\nabla \omega|^2 |f|^2 \;d\vol_g\,.
      \end{equation*}
    \end{proof}
    \begin{proof}[Proof of \cref{elliptic-inequality-thm}]
      Similarly, we begin by integrating a positive term:
      \begin{equation*}
        0\leq \int_{\Omega} \big|\frac{\omega^2}{|\nabla \omega|}\Delta_g f +  |\nabla \omega|f\big|^2 \;d\vol_g = \int_{\Omega} \frac{\omega^4}{|\nabla \omega|^2}|\Delta_g f|^2 + 2\omega^2 f \Delta_g f +  |\nabla \omega|^2|f|^2 \;d\vol_g\,.
      \end{equation*}
      By Stokes' theorem for the cross term and \cref{weak-weight-equation}, we get
      \begin{equation*}
        \int_{\Omega} 2\omega^2 f \Delta_g f \;d\vol_g = \int_{\Omega} -2\omega^2|\nabla f|^2 + (4|\nabla \omega|^2-2\kappa\omega^2) |f|^2 \;d\vol_g\,.
      \end{equation*}
      Since the assumption for an unbounded domain is $\kappa=0$, the proof follows immediately. 
      Otherwise, by the assumption $-\kappa \leq \lambda_1(\frac14-\frac\tau8)$, we see that
      \begin{equation*}
        \int_{\Omega} -2\kappa|\omega f|^2 \;d\vol_g  \leq \lambda_1(\frac12 - \frac\tau4)\int_{\Omega}|\omega f|^2 \;d\vol_g\,.
      \end{equation*}
      By the characterization of the first eigenvalue of the Laplace--Beltrami operator $\Delta_g$, we see that
      \begin{equation*}
        \lambda_1(\frac12-\frac\tau4)\int_{\Omega} \omega^2|f|^2 \;d\vol_g \leq (\frac12-\frac\tau4)\int_{\Omega} |\nabla (\omega f)|^2 \;d\vol_g\,.
      \end{equation*}
    Since $\kappa \leq \lambda_1$\,, \cref{Generalized-CKN-thm} applies and we get
      \begin{equation*}
        (\frac12-\frac\tau4)\int_{\Omega} |\nabla (\omega f)|^2 \;d\vol_g \leq (2-\tau)\int_{\Omega} \omega^2|\nabla f|^2 \;d\vol_g\,.
      \end{equation*}
      Finally, putting the estimates together, we conclude that
      \begin{equation*}
        0 \leq \int_{\Omega} 2\frac{\omega^4}{|\nabla \omega|^2}|\Delta_g f|^2 + 5|\nabla \omega|^2 |f|^2 - \tau\omega^2 |\nabla f|^2 d\vol_g\,.
      \end{equation*}
    \end{proof}
    In the proof of \cref{epsilon-elliptic-inequality-thm}, we deal with the weighted Hessian matrix $\omega^2\nabla^2\log(\omega)$. Since $\kappa=0$ here, \cref{strong-weight-equation} implies that this is a two-dimensional symmetric trace-free matrix. The following lemma uses this structure and is essential in the proof of \cref{epsilon-elliptic-inequality-thm}:
    \begin{lemma}\label{lemma-1}
      Let $A \in \R^{2\times2}$ be a symmetric matrix. Then, for any two real vectors $b,c\in \R^2$,
      \begin{equation}
        2(A:b\otimes c)\ang{b,c} - (A:b\otimes b)|c|^2 - (A:c\otimes c)|b|^2
        = -\tr(A)\ang{b,c^{\perp}}^2\,.
      \end{equation}
      Here $:$ denotes the Frobenius matrix product and $c^\perp$ is a vector perpendicular to $c$.
      \begin{proof}
        We first calculate the expression above in dimension $n$. Since $A$ is symmetric, it admits an orthonormal eigenbasis $e_i$ with real eigenvalues $\mu_i$. Setting $b_i = \ang{b,e_i}$ and $c_i = \ang{c,e_i}$, we compute
        \begin{align*}
          &2(A:b\otimes c)\ang{b,c} - (A:b\otimes b)|c|^2 - (A:c\otimes c)|b|^2\\
          &\qquad= -\sum_{1\leq i,j\leq n} \mu_i(b_ic_j-c_ib_j)^2\,.
        \end{align*}
        In the case $n=2$, this gives
        \begin{equation*}
          2(A:b\otimes c)\ang{b,c} - (A:b\otimes b)|c|^2 - (A:c\otimes c)|b|^2
          = -\tr(A)(b_1c_2-c_1b_2)^2\,.
        \end{equation*}
      \end{proof}
    \end{lemma}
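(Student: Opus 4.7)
The plan is to diagonalize $A$ via the spectral theorem and verify the identity directly in the eigenbasis. Since $A$ is symmetric, there exists an orthonormal basis $\{e_1,e_2\}$ of eigenvectors with real eigenvalues $\mu_1,\mu_2$. Writing $b_i=\ang{b,e_i}$ and $c_i=\ang{c,e_i}$, each of the three bilinear contractions on the left hand side reduces to an elementary sum, such as $\ang{A:b\otimes c}=\mu_1 b_1 c_1+\mu_2 b_2 c_2$, with analogous expressions for $\ang{A:b\otimes b}$ and $\ang{A:c\otimes c}$. Expanding each product and regrouping by the index pair $(i,j)\in\{1,2\}^2$ then reduces the whole problem to recognising the elementary algebraic identity
\[
2\,b_i c_i b_j c_j - b_i^2 c_j^2 - c_i^2 b_j^2 = -(b_i c_j - c_i b_j)^2.
\]

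With this identity in hand the left hand side of the lemma becomes (up to an overall sign) the weighted sum $\sum_{i,j}\mu_i(b_i c_j - c_i b_j)^2$. In dimension two the diagonal contributions $i=j$ vanish, and the two remaining off-diagonal contributions both contain the same squared minor $(b_1 c_2 - b_2 c_1)^2 = \ang{b,c^\perp}^2$; their eigenvalue coefficients then sum to $\mu_1+\mu_2=\tr(A)$, which produces the desired right hand side. An alternative route that sidesteps the spectral theorem would exploit the linearity of the identity in $A$: it would suffice to verify it on the three-element basis $\{I,\ \mathrm{diag}(1,-1),\ \bigl(\begin{smallmatrix}0&1\\1&0\end{smallmatrix}\bigr)\}$ of symmetric $2\times 2$ matrices, where the $I$ case follows from Lagrange's identity $|b|^2|c|^2-\ang{b,c}^2=\ang{b,c^\perp}^2$ and the two traceless cases reduce to direct componentwise checks that both sides vanish.

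The main point to appreciate, rather than an obstacle in the calculation itself, is the genuinely two-dimensional character of the identity. For $n\geq 3$ the off-diagonal index pairs $(i,j)$ yield distinct minors $b_i c_j - c_i b_j$ each weighted individually by $\mu_i$, and these cannot be repackaged as a single antisymmetric scalar times $\tr(A)$; the clean collapse onto the trace is precisely the dimension-two phenomenon that the paper flags as essential for \cref{epsilon-elliptic-inequality-thm}.
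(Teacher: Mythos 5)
Your proof follows essentially the same route as the paper's: diagonalize $A$ by the spectral theorem, expand all three contractions in the eigenbasis, and collapse the resulting sum of squared minors $\sum_{i,j}\mu_i(b_ic_j-c_ib_j)^2$ onto the single minor $(b_1c_2-b_2c_1)^2=\ang{b,c^\perp}^2$ weighted by $\mu_1+\mu_2=\tr(A)$ in dimension two (your alternative basis-of-symmetric-matrices route is a nice extra but not a genuinely different argument). One substantive observation your careful sign bookkeeping surfaces: the elementary identity $2b_ic_ib_jc_j-b_i^2c_j^2-c_i^2b_j^2=-(b_ic_j-c_ib_j)^2$ forces the left-hand side to equal $-\tr(A)\ang{b,c^\perp}^2$ rather than $+\tr(A)\ang{b,c^\perp}^2$ (test $A=I$, $b=e_1$, $c=e_2$: the left side is $-2$ while the stated right side is $+2$), so the lemma as printed — and the paper's own computation, which drops this minus sign — is off by a sign; this is harmless where the lemma is applied, since there $\tr(A)=\omega^2\Delta_g\ln(\omega)=0$.
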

    \begin{proof}[Proof of \cref{epsilon-elliptic-inequality-thm}]
      First, we integrate a carefully chosen positive term of the form below:
      \begin{align}
        0\leq &\int_{\Omega} \big|\frac{\omega^2}{|\nabla \omega|}\Delta_g f + 2\omega \ang{\frac{\nabla \omega}{|\nabla \omega|},\nabla f} + 2|\nabla \omega| f\big|^2 \;d\vol_g \nonumber\\
        =\;&\int_{\Omega} \frac{\omega^4}{|\nabla \omega|^2} |\Delta_g f|^2 + 4\omega^2\ang{\frac{d\omega}{|\nabla \omega|},\nabla f}^2 + 4 |\nabla \omega|^2 |f|^2 \label{squares}\\
        +\;& 4 \frac{\omega^3}{|\nabla \omega|^2} \ang{\nabla \omega,\nabla f} \Delta_g f + 4\omega^2 \Delta_g f f + 8 \ang{\omega \nabla \omega , f \nabla f } \;d\vol_g \,.\label{cross-terms}
      \end{align}
      For the first cross term in \cref{cross-terms}, Stokes' theorem, \cref{strong-weight-equation} (through the weak formulation in \cref{weak-weight-equation}), and the assumption $\kappa=0$ give
      \begin{align}
        &\int_{\Omega} 4 \frac{\omega^3}{|\nabla \omega|^2} \ang{\nabla \omega,\nabla f} \Delta_g f \;d\vol_g\nonumber\\
        =&\int_{\Omega} 2 \textrm{div}_g(\frac{\omega^3}{|\nabla \omega|^2}d\omega) |\nabla f|^2 - 4\nabla (\frac{\omega^3}{|\nabla \omega|^2} \nabla \omega): \nabla f\otimes \nabla f \;d\vol_g \nonumber\\
        = &\int_{\Omega} (4\omega^2 - 4\frac{\omega^4}{|\nabla\omega|^4}\nabla^2(\log(\omega)):\nabla\omega\otimes\nabla\omega)|\nabla f|^2 \label{temp-1}\\& - 4\ang{\nabla (\frac{\omega^3}{|\nabla \omega|^2} \nabla \omega): \nabla f\otimes \nabla f} \;d\vol_g \nonumber\,.
      \end{align}
      The last line follows from:
      \begin{align*}
          2\textrm{div}_g(\frac{\omega^3}{|\nabla\omega|^2}d\omega) &= 2\frac{\omega^3}{|\nabla\omega|^2}\Delta\omega + 6\omega^2 - 4 \frac{\omega^3}{|\nabla\omega|^4}\nabla^2\omega:\nabla\omega\otimes\nabla\omega\\
          &= 4\omega^2 + 2\frac{\omega^4}{|\nabla\omega|^2}\Delta(\log(\omega)) - 4\frac{\omega^4}{|\nabla\omega|^4}\ang{\nabla^2\log(\omega):\nabla\omega\otimes\nabla\omega}\\
          &= 4\omega^2 - 4\frac{\omega^4}{|\nabla\omega|^4}\ang{\nabla^2\log(\omega):\nabla\omega\otimes\nabla\omega} \,.
      \end{align*}
      Here we used the following identity:
      \begin{equation*}
        \omega \nabla^2 \omega = \omega^2 \nabla^2 \log(\omega) + \nabla\omega\otimes\nabla\omega\,,
      \end{equation*}
      for the second and third terms in \cref{temp-1}. We get
      \begin{align}
      \begin{aligned}\label{crucial-2}
        &-4\ang{\nabla (\frac{\omega^3}{|\nabla \omega|^2} \nabla \omega): \nabla f\otimes \nabla f} - 4\frac{\omega^4}{|\nabla\omega|^4}\ang{\nabla^2\log(\omega):\nabla\omega\otimes\nabla\omega}|\nabla f|^2 \\ 
        =&-8\omega^2\ang{\nabla f,\frac{\nabla \omega}{|\nabla \omega|}}^2 + 4\frac{\omega^4}{|\nabla \omega|^4}\big[2\ang{\nabla ^2\log(\omega):\nabla \omega \otimes \nabla f}\ang{\nabla \omega,\nabla f}\\
        -&\ang{\nabla^2\log(\omega):\nabla f\otimes \nabla f}|\nabla \omega|^2-\ang{\nabla^2\log(\omega):\nabla \omega\otimes \nabla \omega}|\nabla f|^2\big]\,.
        \end{aligned}
      \end{align}
      We apply \cref{lemma-1} with:
      \begin{equation*}
        A = \omega^2 \nabla^2 \log(\omega)\,,\;\;b = \frac{\nabla \omega}{|\nabla \omega|}\;\text{ and } c = \nabla f\,,
      \end{equation*}
      and $\tr(A) = \omega^2\Delta_g\log(\omega) = 0$ to see that
      \begin{align*}
        -4\ang{\nabla(\frac{\omega^3}{|\nabla \omega|^2} \nabla \omega): \nabla f\otimes \nabla f} - 4\frac{\omega^4}{|\nabla\omega|^4}\ang{\nabla^2(\log(\omega))&:\nabla\omega\otimes\nabla\omega}|\nabla f|^2 \\ &= -8\omega^2\ang{\nabla f,\frac{\nabla \omega}{|\nabla \omega|}}^2\,.
      \end{align*}
    For the second and third cross terms in \cref{cross-terms}, we see that
      \begin{equation*}
        \int_{\Omega} 4\omega^2 \Delta_g f f + 8 \ang{\omega \nabla \omega , f \nabla f } \;d\vol_g = \int_{\Omega} -4\omega^2 |\nabla f|^2 \;d\vol_g\,.
      \end{equation*}
       Then, putting the estimates together, we see that
      \begin{equation}
        4\int_{\Omega}\omega^2 \ang{\nabla f,\frac{\nabla \omega}{|\nabla \omega|}} ^2 - |\nabla \omega|^2|f|^2\;d\vol_g \leq \int_{\Omega} \frac{\omega^4}{|\nabla \omega|^2} |\Delta_g f|^2\;d\vol_g\label{prelim-bound}\,.
      \end{equation}
      Using \cref{strong-weight-equation} with $\kappa=0$, we get the following identity for \cref{prelim-bound}:
      \begin{align}
        &\int_{\Omega}\omega^2 \ang{\nabla f,\frac{\nabla \omega}{|\nabla \omega|}} ^2 - |\nabla \omega|^2|f|^2\;d\vol_g \nonumber\\ = &\int_{\Omega} |\omega \ang{\nabla f,\frac{\nabla \omega}{|\nabla \omega|}} + |\nabla \omega| f |^2\;d\vol_g\nonumber\\\geq &(\sup_{\Omega} \omega)^{-2\epsilon} \int_{\Omega} \omega^{2\epsilon}|\omega \ang{\nabla f,\frac{\nabla \omega}{|\nabla \omega|}} + |\nabla \omega| f |^2\;d\vol_g\label{temp-2}\,.
      \end{align}
    Notice that $\omega^{1+\epsilon}$ also satisfies \cref{strong-weight-equation} weakly when $\kappa=0$, so we compute \cref{temp-2} as follows:
      \begin{align}
        &\int_{\Omega} \omega^{2\epsilon}|\omega \ang{\nabla f,\frac{\nabla \omega}{|\nabla \omega|}} + |\nabla \omega| f |^2\;d\vol_g \nonumber\\
        =&\int_{\Omega} \omega^{2+2\epsilon} \ang{\nabla f,\frac{\nabla \omega}{|\nabla \omega|}}^2 + \omega^{2\epsilon}|\nabla \omega|^2 |f|^2 + 2\omega^{1+2\epsilon} \ang{\nabla \omega,\nabla f} f \;d\vol_g \nonumber \\
        =&\int_{\Omega} \omega^{2+2\epsilon} \ang{\nabla f,\frac{\nabla \omega}{|\nabla \omega|}}^2 + \omega^{2\epsilon}|\nabla \omega|^2 |f|^2 -\Delta_g(\frac{\omega^{2+2\epsilon}}{2+2\epsilon}) |f|^2 \;d\vol_g \nonumber \\
        =&\int_{\Omega} \omega^{2+2\epsilon}\ang{\nabla f,\frac{\nabla \omega}{|\nabla \omega|}}^2 -(1+2\epsilon) \omega^{2\epsilon}|\nabla \omega|^2 |f|^2 \;d\vol_g\label{temp-3}\,.
      \end{align}
    Notice that for $\omega^{1+\epsilon}$, we have
      \begin{align*}
        0 \leq \int_{\Omega}& \omega^{2\epsilon}| \omega\ang{\nabla f,\frac{\nabla \omega}{|\nabla \omega|}} + (1+\epsilon) |\nabla \omega| f |^2 \;d\vol_g\\
        =\int_{\Omega}& \omega^{2+2\epsilon}\ang{\nabla f,\frac{\nabla \omega}{|\nabla \omega|}}^2 + (1+\epsilon)^2 \omega^{2\epsilon}|\nabla \omega|^2 |f|^2 \\ &\;\;\;\;\;+2(1+\epsilon) \omega^{1+2\epsilon}\ang{\nabla \omega,\nabla f}f \;d\vol_g\\
        =\int_{\Omega}& \omega^{2+2\epsilon}\ang{\nabla f,\frac{\nabla \omega}{|\nabla \omega|}}^2 - (1+\epsilon)^2 \omega^{2\epsilon}|\nabla \omega|^2 |f|^2 \; d\vol_g\,.
      \end{align*}
    We expand the square $(1+\epsilon)^2$ to get a lower bound for \cref{temp-3}:
      \begin{align*}
        \int_{\Omega} \omega^{2+2\epsilon}\ang{\nabla f,\frac{\nabla \omega}{|\nabla \omega|}}^2 -(1+2\epsilon) \omega^{2\epsilon}|\nabla \omega|^2 |f|^2 \;d\vol_g \geq \epsilon^2 \int_{\Omega} \omega^{2\epsilon}|\nabla \omega|^2|f|^2 \;d\vol_g\,.
      \end{align*}
      Thus we obtain the preliminary inequality
      \begin{equation}
        \int_{\Omega} \omega^{2\epsilon}|\nabla \omega|^2 |f|^2 \;d\vol_g\leq \frac{(\sup_{\Omega} \omega)^{2\epsilon} }{4\epsilon^2}\int_{\Omega} \frac{|\omega|^4}{|\nabla \omega|^2}|\Delta_g f|^2 \;d\vol_g\label{final-eq-1}\,.
      \end{equation}
      Then we use \cref{elliptic-inequality-thm} for $\omega^{1+\epsilon}$ with $\kappa=0$ and $\tau = 2$ to see that
      \begin{align*}
        \int_{\Omega}2\omega^{2+2\epsilon} &|\nabla f|^2 \;d\vol_g \\ &\leq \int_{\Omega} 2\frac{\omega^{4+2\epsilon}}{(1+\epsilon)^2|\nabla \omega|^2}|\Delta_g f|^2 + 5 (1+\epsilon)^2\omega^{2\epsilon}|\nabla \omega|^2|f|^2\;d\vol_g\,.
      \end{align*}
    Finally, we use \cref{final-eq-1} to conclude that
      \begin{align*}
        \int_{\Omega}\omega^{2+2\epsilon} |\nabla f|^2 \;d\vol_g \leq (\frac{8\epsilon^2 + 5(1+\epsilon)^4}{8(1+\epsilon)^2})\frac{(\sup_{\Omega}\omega)^{2\epsilon}}{\epsilon^2}\int_{\Omega} \frac{\omega^{4}}{|\nabla \omega|^2}|\Delta_g f|^2 \;d\vol_g\,.
      \end{align*}
    \end{proof}
    \begin{rmk}
      In the case $\mathcal{M}^2 = B^2_1(0)\subset \R^2$ and $\omega=|x|$, after the log-polar transformation $B^2_1 \rightarrow \R^{+} \times S^1 = \mathcal{C}$ given by $t = -\log(|x|)$ and $\theta = \arctan(\frac{y}{x})$, or equivalently after a conformal change of metric with factor $\frac{1}{|x|^2}$, and defining $f = |x|^{-1}u$ for $f \in C_c^{\infty}(B^2_1(0))$, we see that
      \begin{align}
        \int_{B^2_1(0)} |\nabla \omega|^2|f|^2 &= \int_{\mathcal{C}} |u|^2 d\vol_{\mathcal{C}}\label{temp-5}\,,\\
        \int_{B^2_1(0)} \omega^2|\nabla f|^2 &= \int_{\mathcal{C}} |\nabla u|^2 + |u|^2 d\vol_{\mathcal{C}}\nonumber\,,\\
        \int_{B^2_1(0)} \frac{\omega^4}{|\nabla \omega|^2}|\Delta f|^2 &= \int_{\mathcal{C}} |\Delta u + 2 \de_t u + u|^2 d\vol_{\mathcal{C}}\label{temp-4}\,.
      \end{align}
      After squaring and integrating by parts, \cref{temp-4} becomes
      \begin{equation*}
        \int_{B^2_1(0)} \frac{\omega^4}{|\nabla \omega|^2}|\Delta f|^2 = \int_{\mathcal{C}} |\de_{tt} u|^2 + |\de_{t\theta} u|^2 + 2|\de_t u|^2 + |\de_{\theta\theta} u + u|^2\,.
      \end{equation*}
      If $u(t,\theta) = \sin(\theta)$, then \cref{temp-4} vanishes; however, \cref{temp-5} does not vanish, so the term $|\nabla \omega|f$ on the right-hand side of \cref{elliptic-inequality} is necessary. However, the extra $\epsilon$ in the power
      \begin{equation*}
        \int_{B^2_1(0)} \omega^{2+2\epsilon}|\nabla f|^2 = \int_{\mathcal{C}} (|\nabla u|^2 + |u|^2)e^{-2\epsilon t} d\vol_\mathcal{C}\,,
      \end{equation*}
      compactifies the domain $\R^{+}\times S^1$ with a total measure of $\epsilon^{-2}$. This provides some insight into \cref{epsilon-elliptic-inequality-thm} and the constants in \cref{epsilon-elliptic-inequality}.
    \end{rmk}
    We conclude the paper with the proof of the weighted Hodge decomposition estimates:
      \begin{proof}[Proof of \cref{weighted-hodge}]
        We consider the two variational problems below:
        \begin{align}\label{variational-hodge}
          \inf_{\xi \in C^{\infty}_c(\Omega)} \int_{\Omega} |A - \star d\xi|^2 \;d\vol_g\quad\text{and}\quad
          \inf_{\phi\in C^{\infty}_c(\Omega)} \int_{\Omega} \omega^2|A - \star d\phi|^2 \;d\vol_g\,.
        \end{align}
        Let $W^{1,2}_0(\omega^2,\Omega)$ be the completion of $C^\infty_c(\Omega)$ under the $\omega^2$-weighted norm
        \begin{align*}
            \|u\|^2_{W^{1,2}_0(\omega^2,\Omega)} := \int_{\Omega}\omega^2(|u|^2 + |du|^2)\,.
        \end{align*}
        The existence of minimizers of \cref{variational-hodge} follows from the direct method in the calculus of variations. The Euler--Lagrange equations for minimizers imply that
        \begin{align*}
          \star d(A - \star d\xi_1) = 0 &\Rightarrow \text{ there exists } \xi_2 \text{ such that } A-\star d\xi_1 = d\xi_2\text{ and}\\
          \star d(\omega^2(A - \star d\phi_1 )) = 0 &\Rightarrow \text{ there exists } \phi_2 \text{ such that }\omega^2(A - \star d\phi_1) = d\phi_2 \,.
        \end{align*}
        These identities hold in the sense of distributions, where $\xi_2,\phi_2$ are both compactly supported and $\xi_1,\xi_2\in W^{1,2}_0(\Omega)$, $\phi_1 \in W^{1,2}(\omega^2,\Omega)$, $d\phi_2 \in W^{1,2}_0(\Omega)$, and $\omega^{-1}d\phi_2\in L^2(\Omega)$ by the orthogonality below:
        \begin{align}
            \int_{\Omega}\omega^2|A|^2 = \int_\Omega \omega^2|d\phi_1|^2 + \omega^{-2}|d\phi_2|^2\,.
        \end{align}
        Then, by a direct application of \cref{epsilon-elliptic-inequality-thm},
        \begin{align*}
          \|\omega^{1+\epsilon} d(\xi_1 - \phi_1 )\|^2_{L^2(\Omega)} \leq C\frac{(\sup_{\Omega}\omega)^{2\epsilon}}{\epsilon^2} \|\frac{\omega^2}{|d\omega|} \Delta_g(\xi_1 - \phi_1 )\|^2_{L^2(\Omega)}.
        \end{align*}
        Moreover,
        \begin{equation*}
          \|\frac{\omega^2}{|d\omega|} \Delta_g(\xi_1 - \phi_1 )\|^2_{L^2(\Omega)} = \|\frac{\omega^2}{|d\omega|} d(\omega^{-2}d\phi_2 - d\xi_2)\|^2_{L^2(\Omega)} \leq 4\|\omega^{-1} d\phi_2 \|^2_{L^2(\Omega)}\,.
        \end{equation*}
        Combining these two estimates concludes the proof.
      \end{proof}
      
    \begin{ack*}
      I would like to thank Guido de Philippis and Alessandro Pigati for their support and mentorship, and Robert Kohn for his interest and related discussions. I would also like to express my gratitude to the anonymous referee, whose comments greatly improved the presentation of the manuscript. The author has been partially supported by NSF grant DMS-2055686, the Simons Foundation, and the European Union through the European Research Council (ERC), StG ``ANGEVA'', project number 101076411.

The views and opinions expressed are, however, those of the author only and do not necessarily reflect those of the European Union or the European Research Council. Neither the European Union nor the granting authority can be held responsible for them.
      
      \textit{Agradezco a J.A. por su amor y apoyo.}
    \end{ack*}
    \bibliographystyle{acm}
    \bibliography{references}
\end{document}